\newcommand{\map}[3]{#1 \colon #2 \to #3 }                  
\newcommand{\wildcard}{
\makebox[\widthof{$x$}][c]{$\cdot$}}                        
\newcommand{\dee}{\mathop{}\!d}
\newcommand{\hrefemail}[1]{\href{mailto:#1}{#1}}            
\newcommand{\mb}[1]{\mathbb{#1}}                             
\newcommand{\ac}[2]{\accentset{#1}{#2}}                     
\newcommand{\ol}[1]{\overline{#1}}
\newcommand{\ceq}{\coloneqq}                                
\newcommand{\R}{ \mb{R} }                                   
\newcommand{\Z}{ \mb{Z} }                                   
\DeclarePairedDelimiter{\parn}{\lparen}{\rparen}            
\DeclarePairedDelimiter{\brac}{\lbrace}{\rbrace}            
\DeclarePairedDelimiter{\brak}{\lbrack}{\rbrack}            
\DeclarePairedDelimiter{\abs}{\lvert}{\rvert}               
\DeclarePairedDelimiter{\rest}{.}{\rvert}                   
\numberwithin{equation}{section}
\theoremstyle{plain}
\newtheorem*{theorem*}{Theorem}
\newtheorem*{maintheorem*}{Main theorem}
\newtheorem{theorem}{Theorem}[section]
\newtheorem{lemma}[theorem]{Lemma}
\theoremstyle{definition}
\theoremstyle{remark}
\newtheorem*{remark*}{Remark}
\newtheorem{remark}[theorem]{Remark}
\crefname{assumption}{assumption}{assumptions}              
\crefname{enumi}{part}{parts}                               
\title{Rigidity of symmetric doubly-periodic water waves near shear flows}
\author[D. S. Seth]{Douglas Svensson Seth}
\address{Department of Mathematical Sciences, Norwegian University of Science and Technology, 7491 Trondheim, Norway}
\email{\hrefemail{douglas.s.seth@ntnu.no}}
\author[K. Varholm]{Kristoffer Varholm}
\address{Department of Mathematics, University of Pittsburgh, Pittsburgh, PA 15260}
\email{\hrefemail{kristoffer.varholm@pitt.edu}}
\author[E. Wahlén]{Erik Wahlén}
\address{Centre for Mathematical Sciences, Lund University, PO Box 118, 221 00 Lund, Sweden}
\email{\hrefemail{erik.wahlen@math.lu.se}}
\author[J. Weber]{Jörg Weber}
\address{Faculty of Mathematics, University of Vienna, Oskar-Morgenstern-Platz 1, 1090 Vienna, Austria}
\email{\hrefemail{joerg.weber@univie.ac.at}}
\begin{document}
\begin{abstract}
    We prove that symmetric, doubly periodic, capillary-gravity water waves in finite depth bifurcating from non-uniform non-stagnant shear flows are necessarily two-dimensional to leading order. This is in stark contrast to the case of uniform background flows, where the existence of truly three-dimensional bifurcating waves is well known. While this paper focuses on capillary-gravity water waves, our proof also applies to other suitable types of dynamic boundary conditions, such as hydroelastic waves.
\end{abstract}
\maketitle
\section{Introduction}
The steady capillary-gravity water wave problem consists of the incompressible Euler equations
\begin{gather}
    (\bm{u}\cdot \nabla)\bm{u}+\nabla p+g\bm{e}_3  =0,\label{eq:E1} \\
    \nabla\cdot \bm{u}                             =0\label{eq:E2}
\end{gather}
in the fluid domain
\[
    \Omega^\eta=\brac*{\bm x=(\bm x',x_3)=(x_1,x_2,x_3)\in\R^3:-d<x_3<\eta(\bm x')},
\]
equipped with kinematic boundary conditions
\begin{alignat}{-1}
    & u_3                                                    =0                                &  & \qquad \text{on} \quad & B                              & \ceq \brac{x_3 = -d},\label{eq:kinematic_bottom}       \\
    & u_3         =u_1\partial_1\eta + u_2\partial_2\eta                                       &  & \qquad \text{on} \quad & S^\eta                         & \ceq \brac{x_3 =\eta(\bm x')},\label{eq:kinematic_top}
    \intertext{and the dynamic boundary condition}
    & p+\sigma\nabla\cdot\parn[\bigg]{\frac{\nabla \eta}{\sqrt{1+\vert \nabla\eta\vert^2}}} =0 &  & \qquad \text{on} \quad & S^\eta. \label{eq:dynamic_top}
\end{alignat}

The equations above are written in a frame of reference moving with the wave, such that all quantities are independent of time. The boundary components $B$ and $S^\eta$ refer to the flat rigid bottom and the free surface, respectively. Furthermore, $\bm u=(u_1,u_2,u_3)$, $p$, $g>0$ and $\sigma>0$ denote the velocity field, pressure, gravitational constant, and coefficient of surface tension.

We are particularly interested in doubly-periodic solutions with a particular symmetry, and shall exclusively deal with such flows; see \cref{sec:periodicity_symmetry} for precise definitions. A simple class of trivial solutions to the system~\eqref{eq:E1}--\eqref{eq:dynamic_top} is the one consisting of triples of the form
\[
    (\bm u, p, \eta) = (\ac{0}{\bm{u}},\ac{0}{p},\ac{0}{\eta})
\]
with
\begin{equation}\label{eq:background}
    \begin{tblr}{
            row{1} = {mode = dmath, c},
            row{2} = {mode = text, font=\itshape, c}
        }
        \ac{0}{\bm{u}}(\bm x)=(U(x_3),0,0), & \ac{0}{p}(\bm x)=-gx_3, & \ac{0}{\eta}\equiv 0, \\
        (shear flow)                        & (hydrostatic pressure)  & (flat surface)
    \end{tblr}
\end{equation}
where $U$ is an arbitrary function of the vertical variable $x_3$. Since
\[
    \nabla \times \ac{0}{\bm{u}} = (0,U',0),
\]
such a background flow is irrotational if and only if $U$ is constant, corresponding to uniform flow.

Let us already now state an informal version of the main theorem of this paper. We postpone its precise statement until \cref{thm:main_theorem,rem:dynamic_condition}, as additional exposition is required.

\begin{maintheorem*}
    Consider a trivial solution of the form~\eqref{eq:background} that is both non-uniform and not stagnant. In other words, that $U$ is neither constant nor has any zeros. Then any $C^2$ curve of symmetric $C^2$ solutions to~\eqref{eq:E1}--\eqref{eq:dynamic_top} bifurcating from it consists of waves that are two-dimensional to leading order.

    Moreover, the dynamic boundary condition~\eqref{eq:dynamic_top} can be replaced by any condition ensuring that
    \begin{enumerate*}[(i)]
    \item the kernel of the linearized problem is finite-dimensional, and
    \item nontrivial members of the kernel depend nontrivially on $x_1$.
    \end{enumerate*}
\end{maintheorem*}

\begin{remark*}
    We do \emph{not} rigorously exclude the possibility that truly three-dimensional waves may still appear at higher order. However, we conjecture that this does not occur.
\end{remark*}

The obstruction to obtaining truly three-dimensional solutions comes from a resonance at quadratic order.
The problem with potential resonances in the three-dimensional steady Euler equations has been discussed previously in the literature, especially in the equivalent problem of magnetohydrostatic equilibria. See for example~\cite{weitzner_14_ideal} for a formal construction of a doubly-periodic solution with a fixed boundary. The solution is a small perturbation of a shear flow with rational rotation number with respect to a periodic lattice, and is given by a formal power series. Although a proof of convergence is lacking, it is shown that the equations for the terms in the series can be solved to arbitrary order, despite the presence of resonances. Note, however, that this solution is not symmetric in the sense we will consider here.

Somewhat surprisingly, our result is dramatically different from that of uniform background flows, where $ U $ is constant, since there \emph{is} a curve of truly three-dimensional solutions bifurcating from an open set of uniform flows. In the irrotational setting, three-dimensional doubly-periodic capillary-gravity  waves  ($\sigma > 0$) were first constructed by Reeder \& Shinbrot~\cite{reeder_81_three}. They obtained one-parameter families of smooth small-amplitude solutions bifurcating from uniform flows, with analytic dependence on the amplitude parameter. See also~\cite{sun_93_three} for a related result involving pressure forcing, and~\cite{craig_00_travelling, groves_01_spatial, groves_07_three} for alternative approaches and asymmetric solutions. Doubly-periodic pure gravity waves ($\sigma=0$)  on infinite depth have been constructed by Iooss \& Plotnikov, both with and without symmetry assumptions~\cite{iooss_09_small, iooss_11_asymmetrical}. In contrast to  capillary-gravity waves, these do not come in continuous families, but are only guaranteed to exist for a Cantor set of parameters due to small-divisor issues.

More recently, the existence of symmetric doubly-periodic capillary-gravity waves with small vorticity bifurcating from irrotational uniform background flow was established in~\cite{seth_24_symmetric}. The paper uses a formulation inspired by Lortz' work on magnetohydrostatic equilibria~\cite{lortz_70_uber}, and an adaptation of the classical Crandall--Rabinowitz approach allowing for nonlinear mappings that are Fréchet differentiable with loss of regularity. The solutions and the bifurcation curve can be chosen to have any desired finite degree of regularity.

This raises the question of whether one can find similar symmetric waves bifurcating from other shear flows, as one certainly can in two dimensions~\cite{haziot_22_traveling}. Our result shows that this is not possible ---  at least not through a local bifurcation approach in the spirit of Crandall--Rabinowitz. Such an argument produces a curve of nontrivial solutions bifurcating in the direction of a genuinely doubly-periodic solution to the linearized problem. Nevertheless, this still leaves open the possibility of solutions existing for a Cantor set of parameters, as in the case of pure gravity irrotational waves. Note, however, that the method employed in~\cite{iooss_09_small, iooss_11_asymmetrical} relies on the existence of a sufficiently high order approximate solution, which our result precludes.

Neither does our result rule out the possibility of finding less symmetric solutions bifurcating from general shear flows, or from other types of background flows under a flat surface. Under the special assumption that the velocity $\bm{u}$ is a Beltrami vector field, small rotational solutions have been found bifurcating from a particular family of background flows of the form $(U(x_3),V(x_3),0)$; see~\cite{lokharu_20_existence,groves_24_analytical,seth_24_internal}.
There may also exist weaker solutions; see~\cite{enciso_25_steady} and references therein for constructions of weak solutions to the steady Euler equations with a fixed boundary.

Not much more is currently known about fully three-dimensional waves, except for explicit Gerstner-type solutions for equatorially-trapped waves~\cite{constantin_12_exact,henry_18_three} and edge waves along a sloping beach~\cite{constantin_01_edge}, and several non-existence results for the case of constant vorticity~\cite{wahlen_14_non,chen_23_rigidity,constantin_11_two,martin_22_three}. This is in stark contrast to the well-studied two-dimensional case, where significantly more is known, and with fewer constraints. Instead of providing a detailed list of references here, we simply refer to the recent survey article~\cite{haziot_22_traveling} and the references therein.

The present paper is organized as follows: In \cref{sec:preliminaries} we flatten the free boundary problem~\eqref{eq:E1}--\eqref{eq:dynamic_top} to the fixed domain~$\Omega^0$, and explain the imposed symmetries. We then proceed to introduce asymptotic expansions of all quantities, and present the statement of \cref{thm:main_theorem} in \cref{sec:MainTheorem_expansions}. \Cref{sec:linear_level} concerns the investigation of the linear problem, and characterization of its kernel. Finally, in \cref{sec:quadratic_level} we prove \cref{thm:main_theorem} by establishing a solvability criterion for the equations at the second order.

\section{Preliminaries}\label{sec:preliminaries}
\subsection{Flattening}
As usual, since~\eqref{eq:E1}--\eqref{eq:dynamic_top} is a free boundary problem, we must flatten the a priori unknown fluid domain in order to get an equivalent problem on a fixed domain. There are several approaches commonly used in literature, but here we work with the same one employed in~\cite{seth_24_symmetric}. Specifically, the map $\map{\Phi}{\Omega^0}{\Omega^\eta}$ defined by
\[
    \Phi(\ol{\bm{x}}) \ceq \parn[\big]{\ol{\bm{x}}',\ol{x}_3+\varphi(\ol{\bm{x}})} = \ol{\bm{x}} + \varphi(\ol{\bm{x}}) \bm{e}_3,
\]
where
\[
    \varphi(\ol{\bm{x}}) \ceq\parn[\bigg]{1+\frac{\ol{x}_3}{d}}\eta(\ol{\bm{x}}'),
\]
using the convention that $\ol{\bm{x}} = (\ol{\bm{x}}',\ol{x}_3)$. The Jacobi matrix of this flattening is given by
\[
    J=
    \begin{pmatrix}
        1                  & 0                  & 0    \\
        0                  & 1                  & 0    \\
        \partial_1 \varphi & \partial_2 \varphi & \rho
    \end{pmatrix}=I+\bm{e}_3\otimes\nabla\varphi,
\]
where
\[
    \rho \ceq  1+\partial_3\varphi= 1 + \frac{\eta}{d}= \det(J).
\]

For any scalar field $f$ on $\Omega^\eta$, we define
\[
    \ol{f} \ceq f \circ \Phi
\]
on $\Omega^0$, while vector fields $\bm{v}$ are transformed through the more involved
\[
    J \ol{\bm{v}} \ceq \rho \bm{v} \circ \Phi
\]
instead. In particular, the latter definition ensures that divergence-free vector fields are still divergence-free after flattening.

In the flattened coordinates on
\[
    \Omega \ceq \Omega^0 = \R^2 \times (-d,0),
\]
the incompressible Euler equations~\eqref{eq:E1} and \eqref{eq:E2} are equivalent to
\begin{gather*}
    \frac{1}{\rho} J^\top \parn*{\ol{\bm{u}} \cdot \nabla}\parn*{\frac{1}{\rho}J\ol{\bm{u}}} + \nabla \brak*{\ol{p} + g(\ol{x}_3 + \varphi)}  =0, \\
    \nabla \cdot \ol{\bm{u}}                                                                                                           =0,
\end{gather*}
the dynamic boundary condition~\eqref{eq:dynamic_top} turns into
\begin{alignat*}{-1}
    \ol{p}+\sigma\nabla\cdot\parn[\bigg]{\frac{\nabla \eta}{\sqrt{1+\abs{\nabla\eta}^2}}} & =0 \qquad \text{on} \quad  &  & S \ceq S^0,
    \intertext{while the kinematic boundary conditions~\eqref{eq:kinematic_bottom} and \eqref{eq:kinematic_top} become simply}
    \ol{u}_3                                                                              & = 0 \qquad \text{on} \quad &  & \partial \Omega = S \cup B.
\end{alignat*}

If we now introduce the flattened dynamic pressure
\[
    \wp \ceq \ol{p} + g(\ol{x}_3 + \varphi),
\]
the normalized Jacobi matrix
\begin{equation}
    \label{eq:normalized_jacobi}
    M \ceq \frac{1}\rho J,
\end{equation}
and drop any and all bars, we end up with the equations
\begin{gather}
    M^\top\parn*{ \bm{u}\cdot \nabla}(M\bm{u}) + \nabla \wp = 0, \label{eq:flatE1} \\
    \nabla \cdot \bm{u} = 0 \label{eq:flatE2}
\end{gather}
in $\Omega$, with associated boundary conditions
\begin{alignat}{-1}
    u_3                                                                                          & = 0 \qquad \text{on} \quad &  & \partial \Omega \label{eq:flatKin}, \\
    \wp - g\eta + \sigma\nabla\cdot\parn[\bigg]{\frac{\nabla \eta}{\sqrt{1+\abs{\nabla\eta}^2}}} & =0 \qquad \text{on} \quad  &  & S \label{eq:flatDyn}.
\end{alignat}
We direct the reader to~\cite{seth_24_symmetric} for more details on the flattening transform and the resulting formulas.

\subsection{Periodicity and symmetry}\label{sec:periodicity_symmetry}
Given wavelengths $\lambda_1,\lambda_2 >0$, we introduce the dual lattices $\Lambda$ and $\Lambda^*$ through
\[
    \Lambda \ceq  \lambda_1 \Z \times \lambda_2 \Z, \qquad \Lambda^* \ceq  \kappa_1 \Z \times \kappa_2 \Z,
\]
where $\lambda_i \kappa_i = 2\pi$. Let also $R_j$ denote reflection of the $j$th coordinate for $j=1,2$. We say that a vector field $\bm{v}$ is $(\pm)$-symmetric if it satisfies
\[
    \bm{v}(R_j) = \pm(-1)^j R_j\bm{v},
\]
and that a scalar field $f$ is $(\pm)$-symmetric if
\[
    f(R_j) = (\pm 1)^j f.
\]

We will look for $(+)$-symmetric $\Lambda$-periodic solutions to~\eqref{eq:flatE1}--\eqref{eq:flatDyn}. Spaces of functions that are $(+)$-symmetric and $\Lambda$-periodic will be demarcated with a subscripted $+$. As an example, $C_+^2(\Omega)$ is the space of twice continuously differentiable functions with these properties. We remark that the trivial solutions in~\eqref{eq:background} do not constitute \emph{all} trivial solutions in this symmetry class. The following is pretty much immediate.

\begin{lemma}
    \label{lem:trivial}
    If a vector field $\bm{u}$ is $(+)$-symmetric, $\Lambda$-periodic, and does not depend on $x_1$, then
    \[
        \bm{u}(\bm{x}) = (U(x_2,x_3),0,0),
    \]
    with $U$ being even and $\lambda_2$-periodic with respect to $x_2$. When $\eta \equiv 0$, all such $\bm{u} \in C_+^1(\Omega;\R^3)$ are trivially solutions to~\eqref{eq:flatE1}--\eqref{eq:flatDyn} with corresponding dynamic pressure $\wp \equiv 0$.
\end{lemma}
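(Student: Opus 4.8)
The plan is to unwind the definition of $(+)$-symmetry componentwise and then verify the equations directly. First I would expand the relation $\bm{u}(R_j) = (-1)^j R_j \bm{u}$ in coordinates. For $j = 1$, since $R_1$ negates the first coordinate of both the argument and the vector value, this says that $u_1$ is even in $x_1$ while $u_2$ and $u_3$ are odd in $x_1$. For $j = 2$ it says that $u_1$ and $u_3$ are even in $x_2$ while $u_2$ is odd in $x_2$. Invoking the hypothesis that $\bm{u}$ is independent of $x_1$: the components $u_2$ and $u_3$ are then simultaneously independent of $x_1$ and odd in $x_1$, hence identically zero. Writing $U \ceq u_1$, which is a function of $(x_2,x_3)$ alone, the $j = 2$ relation gives that $U$ is even in $x_2$, and $\Lambda$-periodicity of $\bm{u}$ forces $U$ to be $\lambda_2$-periodic in $x_2$. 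This establishes the structural claim $\bm{u}(\bm{x}) = (U(x_2,x_3),0,0)$.

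For the second assertion, I would set $\eta \equiv 0$, so that $\varphi \equiv 0$, $\rho \equiv 1$, and the normalized Jacobi matrix $M$ equals the identity. Then~\eqref{eq:flatE1} collapses to $(\bm{u}\cdot\nabla)\bm{u} + \nabla\wp = 0$. Since $\bm{u}\cdot\nabla = U(x_2,x_3)\,\partial_1$ and $\bm{u}$ does not depend on $x_1$, the convective term $(\bm{u}\cdot\nabla)\bm{u} = U\,\partial_1(U,0,0)$ vanishes identically, so the momentum equation is satisfied by the choice $\wp \equiv 0$. The incompressibility condition~\eqref{eq:flatE2} holds because $\nabla\cdot\bm{u} = \partial_1 U = 0$; the kinematic condition~\eqref{eq:flatKin} holds because $u_3 \equiv 0$ throughout $\Omega$; and the dynamic condition~\eqref{eq:flatDyn} reduces to $0 = 0$ on $S$ since $\wp$ and $\eta$ both vanish there. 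Finally, $\wp \equiv 0$ and $\eta \equiv 0$ trivially lie in the relevant $(+)$-symmetric $\Lambda$-periodic spaces, and $\bm{u} \in C_+^1(\Omega;\R^3)$ by assumption.

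There is essentially no substantive obstacle here; the lemma is bookkeeping, as the paper's "pretty much immediate" indicates. The only place to be slightly careful is the interaction of the scalar factor $(-1)^j$ with the reflection matrix $R_j$ acting on vector values when extracting the componentwise parities. Once those parities are pinned down, the first part follows from $x_1$-independence alone, and the second part follows from the vanishing of the self-advection term of an $x_1$-independent shear profile together with the fact that $\eta \equiv 0$ trivializes the flattening.
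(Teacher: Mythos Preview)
Your proposal is correct. The paper does not actually give a proof of this lemma; it simply prefaces the statement with ``The following is pretty much immediate'' and moves on. Your argument is precisely the intended unwinding: extracting the componentwise parities from the $(+)$-symmetry relation, using $x_1$-independence to kill $u_2$ and $u_3$, and then checking \eqref{eq:flatE1}--\eqref{eq:flatDyn} directly with $M=I$ and $\wp\equiv 0$. There is nothing to compare against, and nothing to fix.
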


Note that $(+)$-symmetric $\bm{u}$ entails that  the vorticity $\nabla \times \bm{u}$ is $(-)$-symmetric. Moreover, if we write $\bm{u}$, $\wp$ and $\eta$ in terms of their horizontal Fourier series
\begin{align*}
    \bm{u}(\bm{x}) & = \sum_{\bm{k} \in \Lambda^*} \bm{u}^{\bm{k}}(x_3)e^{i \bm{k} \cdot \bm{x}'}, \\
    \wp(\bm{x})    & = \sum_{k \in \Lambda^*} \wp_{\bm{k}}(x_3)e^{i \bm{k} \cdot \bm{x}'},         \\
    \eta(\bm{x}')  & = \sum_{k \in \Lambda^*} \eta_{\bm{k}}e^{i \bm{k} \cdot \bm{x}'},
\end{align*}
with $\bm{x}'=(x_1,x_2)$, then the coefficients must also be $(+)$-symmetric with respect to $(\bm{k},x_3)$. Therefore, we may also write the series as
\begin{equation}
    \label{eq:fourier_series_trig}
    \begin{aligned}
        \bm{u}(\bm{x}) & = \sum_{\bm{k} \in \Lambda^*}
        \begin{pmatrix*}[r]
            u_1^{\bm{k}}(x_3)\cos(k_1 x_1)\cos(k_2 x_2)  \\
            - u_2^{\bm{k}}(x_3)\sin(k_1 x_1)\sin(k_2 x_2) \\
            i u_3^{\bm{k}}(x_3)\sin(k_1 x_1)\cos(k_2 x_2)
        \end{pmatrix*},                                              \\
        \wp(\bm{x})    & =\sum_{\bm{k} \in \Lambda^*} \wp_{\bm{k}}(x_3)\cos(k_1 x_1)\cos(k_2 x_2), \\
        \eta(\bm{x}')  & = \sum_{\bm{k} \in \Lambda^*} \eta_{\bm{k}}\cos(k_1 x_1)\cos(k_2 x_2).
    \end{aligned}
\end{equation}
Note that all Fourier coefficients corresponding to terms in~\eqref{eq:fourier_series_trig} that vanish trivially due to the presence of a $\sin(0\wildcard)$, also vanish identically by the aforementioned $(+)$-symmetry.

\section{Expansion and main theorem}\label{sec:MainTheorem_expansions}
Let us now, at least formally, write down the asymptotic expansions which we shall consider. We write
\begin{align}
    \eta   & =\epsilon\ac{1}{\eta}+\epsilon^2\ac{2}{\eta}+o(\epsilon^2),\label{eq:expansion_eta} \\
    \wp    & =\epsilon\ac{1}{\wp}+\epsilon^2\ac{2}{\wp}+o(\epsilon^2),\label{eq:expansion_r}
    \intertext{and}
    \bm{u} & =
    \ac{0}{\bm{u}}+
    \epsilon \ac{1}{\bm{u}} + \epsilon^2 \ac{2}{\bm{u}} + o(\epsilon^2),\label{eq:expansion_u}
\end{align}
where we recall the particular form of $\ac{0}{\bm{u}}$ from~\eqref{eq:background}. The triple $(\bm{u},\wp,\eta) = (\ac{0}{\bm{u}},0,0)$ is then a trivial solution of~\eqref{eq:flatE1}--\eqref{eq:flatDyn}.

We can now state a precise version of our main theorem.
\begin{theorem}\label{thm:main_theorem}
    Let $\epsilon\mapsto (\bm u^\epsilon,\wp^\epsilon,\eta^\epsilon)\in C_+^2(\overline{\Omega};\R^3)\times C_+^2(\overline{\Omega})\times C_+^2(\R^2)$ be a $C^2$-curve of solutions to~\eqref{eq:flatE1}--\eqref{eq:flatDyn} in a neighborhood of $\epsilon = 0$, for which
    \[
        (\bm u^0,\wp^0,\eta^0)=(\ac{0}{\bm{u}}, 0, 0),
    \]
    with $\ac{0}{\bm{u}}$  as in~\eqref{eq:background} for some non-constant $U\in C^2([-d,0])$ having no zeros. Then, in the asymptotic expansions~\eqref{eq:expansion_eta}--\eqref{eq:expansion_u}, we necessarily have that
    \[
        \ac{1}{u}_2 \equiv 0 \quad \text{and} \quad \partial_2(\ac{1}{u}_3,\ac{1}{\wp},\ac{1}{\eta}) \equiv 0.
    \]
\end{theorem}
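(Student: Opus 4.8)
The plan is to work order by order in $\epsilon$ and to extract a solvability criterion at second order that a genuinely three-dimensional first-order profile cannot satisfy.

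\textbf{First order.} Differentiating \eqref{eq:flatE1}--\eqref{eq:flatDyn} once in $\epsilon$ at $\epsilon=0$ shows that $(\ac{1}{u},\ac{1}{\wp},\ac{1}{\eta})$ solves the linearized problem $\mathcal{L}(\ac{1}{u},\ac{1}{\wp},\ac{1}{\eta})=0$ at the trivial state. Since $\ac{0}{\bm{u}}$ depends only on $x_3$, the operator $\mathcal{L}$ acts diagonally on the horizontal Fourier modes $\bm{k}\in\Lambda^*$ of \cref{eq:fourier_series_trig}, so I would first describe $\ker\mathcal{L}$ mode by mode. On a mode $\bm{k}=(0,k_2)$ the $(+)$-symmetry forces $\ac{1}{u}_2$ and $\ac{1}{u}_3$ to vanish there, after which the linearized momentum and boundary equations force $\ac{1}{\wp}$ and $\ac{1}{\eta}$ to vanish there as well; what remains is exactly the family of trivial shears of \cref{lem:trivial}. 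Hence $\ac{1}{u}_2,\ac{1}{u}_3,\ac{1}{\wp},\ac{1}{\eta}$ carry only Fourier modes with $k_1\neq0$, and the theorem reduces to showing that the first-order solution carries no mode $\bm{k}=(k_1,k_2)$ with $k_1\neq0\neq k_2$. Such a mode would have to lie in the part of $\ker\mathcal{L}$ with $k_1\neq 0$; its existence hinges on a Rayleigh-type boundary value problem for the profile of $\ac{1}{u}_3$, and it is here (and in the second-order step) that non-stagnation of $U$ enters, to keep $U''/U$ and the related quantities regular.

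\textbf{Second order.} Differentiating twice gives $\mathcal{L}(\ac{2}{u},\ac{2}{\wp},\ac{2}{\eta})=\mathcal{Q}$, where $\mathcal{Q}$ is an explicit field, quadratic in $(\ac{1}{u},\ac{1}{\eta})$, arising from $M^\top(\bm{u}\cdot\nabla)(M\bm{u})$ together with the quadratic corrections of $M$. Existence of the curve at second order forces $\mathcal{Q}$ to lie in the range of $\mathcal{L}$, which can again be tested mode by mode. The crucial observation is that on a mode $(0,m)$ with $m\neq0$ the operator $\mathcal{L}$ is degenerate, and — unwinding the computation from the first-order step — its range is the codimension-one subspace on which the second and third components of $\mathcal{Q}^{(0,m)}$ form a gradient in $(x_2,x_3)$, i.e.\ $\partial_3\mathcal{Q}_2^{(0,m)}+m\,\mathcal{Q}_3^{(0,m)}\equiv0$ on $(-d,0)$. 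Now suppose the first-order solution carries a mode $\bm{k}_0=(k_1,k_2)$ with $k_1\neq0\neq k_2$, and pick one whose second component is of maximal modulus among such modes. Because trivial shears carry no $x_1$-dependence, the contributions to $\mathcal{Q}$ at the mode $(0,2k_2)$ come exclusively from the quadratic self-interactions of the first-order modes whose $x_2$-component has modulus $|k_2|$ (in particular from $\bm{k}_0$ with itself and with its image under $R_2$). Computing $\mathcal{Q}_2^{(0,2k_2)}$ and $\mathcal{Q}_3^{(0,2k_2)}$, substituting the first-order relations (incompressibility together with the $\bm{e}_1$ and $\bm{e}_2$ momentum equations) to eliminate the profile of $\ac{1}{u}_1$, and integrating the putative solvability identity $\partial_3\mathcal{Q}_2^{(0,2k_2)}+2k_2\,\mathcal{Q}_3^{(0,2k_2)}$ against a suitable weight over $(-d,0)$, I expect it to reduce to an integral of a sign-definite expression in $U$, $U'$ and the real profiles of $\ac{1}{u}_2$ and $\ac{1}{u}_3$. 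That integral vanishes when $U$ is constant — consistent with the Reeder--Shinbrot family over uniform flow — but is nonzero when $U$ is non-constant and non-stagnant, contradicting solvability. Hence no such $\bm{k}_0$ exists, which is the assertion of the theorem.

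\textbf{Main obstacle.} The only genuinely computational part is the second-order step: carrying the expansion of $M^\top(\bm{u}\cdot\nabla)(M\bm{u})$ to quadratic order with the flattening corrections, pinning down $\mathcal{Q}^{(0,2k_2)}$, and — after substituting the first-order constraints and integrating by parts using $\ac{1}{u}_3(-d)=\ac{1}{u}_3(0)=0$ in the flattened variables — showing that the resulting obstruction is not identically zero. Isolating the exact role of non-constancy (it is precisely what prevents the integrand from being a perfect $x_3$-derivative) and of non-stagnation (it controls the denominators produced when the pressure is eliminated) is the delicate point. A secondary issue is to exclude cancellation when several three-dimensional modes share the maximal $x_2$-component; sign-definiteness of the per-mode obstruction settles this.
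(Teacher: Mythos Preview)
Your strategy is essentially the paper's: analyse the linearised kernel mode by mode, then at second order take the first component of the curl of the momentum equation and project onto the $k_1=0$ modes (equivalently, average in $x_1$), which kills all the $\ac{2}{\cdot}$-contributions and leaves a quadratic obstruction in the first-order data. Your maximal-$|k_2|$ trick is a valid substitute for the paper's \cref{lem:unique}, and the paper itself remarks that this lemma is dispensable.

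The one place where your plan diverges from what actually happens is the endgame. You anticipate that, after integrating the solvability identity against a suitable weight over $(-d,0)$, you will land on a sign-definite integral that vanishes precisely when $U'\equiv 0$. In the paper's computation this does \emph{not} occur: what one obtains is a pointwise identity
\[
    U'(x_3)\,f(x_3)=0 \quad\text{on }[-d,0],\qquad
    f=\sum_{\bm{k}}\frac{a_{\bm{k}}^2 k_2^2}{k_1^2}\,Q_{\bm{k}}^2\bigl(k_1^2-k_2^2+q_{\bm{k}}^2\bigr),
\]
and the factor $k_1^2-k_2^2+q_{\bm{k}}^2$ is not sign-definite (it can be negative near $x_3=-d$, where $q_{\bm{k}}=0$). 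So a naive ``integrate and observe a sign'' argument does not close. The paper instead argues topologically on the zero set $V=\{U'=0\}\subset(-d,0)$: using the Riccati equation one checks that $f'>0$ at every point of $V$, hence $f$ is nonzero on a punctured neighbourhood of each such point, whence $V$ is open; and the initial condition $q_{\bm{k}}(-d)=0$ shows $V$ is nonempty near the bed. Connectedness then forces $U'\equiv 0$. This open--closed step is where the genuine work sits, and it is precisely the mechanism by which non-constancy of $U$ enters; you should expect to need it (or something equivalent) rather than a single sign-definite integral.

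A minor point: the range of $\mathcal{L}$ on a mode $(0,m)$ is not ``codimension one'' --- the obstruction is a pointwise-in-$x_3$ condition, so infinite-codimensional --- but since you only need one nontrivial consequence of it, this does not affect the argument.
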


Let us quickly remark that it is not possible to also infer that $\partial_2\ac{1}{u}_1 \equiv 0$ in \Cref{thm:main_theorem} without making further assumptions. This is because we know from \Cref{lem:trivial} that there are additional trivial solutions nearby where ${u}_1$ does in fact depend on $x_2$.

In order to prove \cref{thm:main_theorem}, we progressively investigate~\eqref{eq:flatE1}--\eqref{eq:flatDyn} order by order in $\epsilon$. As previously noted,~\eqref{eq:flatE1}--\eqref{eq:flatDyn} are certainly all satisfied at order $\epsilon^0$ by virtue of $(\smash{\ac{0}{\bm{u}}},0,0)$ being a trivial solution.

\section{Solvability at the linear level}\label{sec:linear_level}

It is convenient to introduce a couple of auxiliary expansions. First,
\[
    \varphi = \epsilon \ac{1}{\varphi} + \epsilon^2 \ac{2}{\varphi} + o(\epsilon^2)
\]
where $\ac{j}{\varphi} = (1+x_3/d)\ac{j}{\eta}$, in terms of which the matrix $M$ in~\eqref{eq:normalized_jacobi} has expansion
\[
    M = I + \epsilon\ac{1}{M} + \epsilon^2 \ac{2}{M} + o(\epsilon^2)
\]
with
\[
    \ac{1}{M} = \bm{e}_3 \otimes \nabla \ac{1}{\varphi}- \partial_3 \ac{1}{\varphi} I,
\]
but where the exact form of $\ac{2}{M}$ will turn out to not matter for our purposes.

The order $\epsilon$ terms of~\eqref{eq:flatE1} give the equation
\[
    (\ac{1}{\bm{u}} \cdot \nabla) \ac{0}{\bm{u}} + (\ac{0}{\bm{u}} \cdot \nabla)\ac{1}{\bm{u}} + \nabla \ac{1}{\wp} + (\ac{0}{\bm{u}} \cdot \nabla)(\ac{1}{M}\ac{0}{\bm{u}}) = 0
\]
or equivalently
\begin{equation}\label{eq:conservation_of_momentum_eps_1}
    \ac{1}{u}_3U'\bm{e}_1+U\partial_1\ac{1}{\bm{u}}+U^2 \partial_1\nabla \times ( \ac{1}{\varphi}\bm{e}_2) + \nabla  \ac{1}{\wp}=0
\end{equation}
after inserting the expressions for $\ac{0}{\bm{u}}$ and $\ac{1}{M}$.
Meanwhile, from incompressibility~\eqref{eq:flatE2} we obtain
\begin{equation}\label{eq:incompressibility_eps}
    \nabla\cdot \ac{n}{\bm{u}}=0,
\end{equation}
while the kinematic boundary condition~\eqref{eq:flatKin} gives
\begin{alignat}{-1}
    \ac{n}{u}_3                                         & =0 & \qquad \text{on} \quad & \partial \Omega \label{eq:kinematic_eps}
    \intertext{for $n\in\brac{1,2}$. Finally, the dynamic boundary condition~\eqref{eq:flatDyn} reads}
    \ac{1}{\wp}-g\ac{1}{\eta}+\sigma\Delta \ac{1}{\eta} & =0 & \qquad \text{on} \quad & S \label{eq:dynamic_condition_eps_1}
\end{alignat}
at order $\epsilon$.

\subsection{Recovering \texorpdfstring{$\protect\ac{1}{\wp}$}{℘¹} from \texorpdfstring{$\protect\ac{1}{\eta}$}{𝜂¹}}
Taking the divergence of~\eqref{eq:conservation_of_momentum_eps_1}, and using~\eqref{eq:incompressibility_eps}, we have
\[
    2U'\partial_1\parn*{\ac{1}{u}_3 + U \partial_1 \ac{1}{\varphi}} + \Delta \ac{1}{\wp} = 0,
\]
or
\[
    \Delta \ac{1}{\wp} - 2\frac{U'}{U} \partial_3 \ac{1}{\wp} = 0,
\]
by taking advantage of the third component of~\eqref{eq:conservation_of_momentum_eps_1}, reading
\begin{equation}
    U \partial_1\parn*{\ac{1}{u}_3 + U\partial_1 \ac{1}{\varphi}} + \partial_3 \ac{1}{\wp} =0.  \label{eq:conservation_of_momentum_eps_1_third}
\end{equation}
Furthermore,~\eqref{eq:conservation_of_momentum_eps_1_third} means we must impose
\begin{equation}
    \label{eq:kinematic_p}
    \begin{aligned}
        \partial_3 \ac{1}{\wp} +U(0)^2 \partial_1^2 \ac{1}{\eta} & =0 \qquad \text{on} \quad S, \\
        \partial_3 \ac{1}{\wp}                                   & = 0 \qquad \text{on} \quad B
    \end{aligned}
\end{equation}
due to the kinematic boundary conditions~\eqref{eq:kinematic_eps}. We also recall that the dynamic boundary condition in~\eqref{eq:dynamic_condition_eps_1} must be satisfied.

Expanding everything in Fourier series with respect to the horizontal variable $\bm{x}'$, we find
\begin{equation}
    \ac{1}{\wp}_{\bm{k}}'' - 2\frac{U'}{U}\ac{1}{\wp}_{\bm{k}}' - \abs{\bm{k}}^2 \ac{1}{\wp}_{\bm{k}} = 0 \label{eq:ode_for_p1_hat}
\end{equation}
for all $\bm{k} \in \Lambda^*$, which can also be written in divergence form
\begin{equation}
    \label{eq:ode_for_p1_hat_divergence}
    \parn[\bigg]{\frac{\ac{1}{\wp}_{\bm{k}}'}{U^2}}' =\abs{\bm{k}}^2 \frac{\ac{1}{\wp}_{\bm{k}}}{U^2},
\end{equation}
with the boundary conditions
\begin{align}
    \ac{1}{\wp}_{\bm{k}}'(0)  & = k_1^2 U(0)^2 \ac{1}{\eta}_{\bm{k}}, \label{eq:kinematic_surface_p1_hat}    \\
    \ac{1}{\wp}_{\bm{k}}'(-d) & = 0, \label{eq:kinematic_bed_p1_hat}
    \intertext{coming from~\eqref{eq:kinematic_p}, and}
    \ac{1}{\wp}_{\bm{k}}(0)   & = (g + \sigma \abs{\bm{k}}^2)\ac{1}{\eta}_{\bm{k}} \label{eq:dynamic_p1_hat}
\end{align}
coming from the dynamic boundary condition~\eqref{eq:dynamic_condition_eps_1}.

We first focus on solving~\eqref{eq:ode_for_p1_hat_divergence} under~\eqref{eq:kinematic_bed_p1_hat}. The first observation is that any constant $\ac{1}{\wp}_0$ is a solution when $\bm{k} = 0$. Secondly, multiplying~\eqref{eq:ode_for_p1_hat_divergence} by $\ac{1}{\wp}_{\bm{k}}$ and integrating by parts shows that
\begin{align*}
    \frac{\ac{1}{\wp}_{\bm{k}}(x_3)\ac{1}{\wp}_{\bm{k}}'(x_3)}{U(x_3)^2} & = \rest*{\frac{\ac{1}{\wp}_{\bm{k}} \ac{1}{\wp}_{\bm{k}}'}{U^2}}_{-d}^{x_3} =\int_{-d}^{x_3} \frac{\abs{\bm{k}}^2 \ac{1}{\wp}_{\bm{k}}(t)^2 + \ac{1}{\wp}_{\bm{k}}'(t)^2}{U(t)^2}\dee t
\end{align*}
for all $x_3 \in [-d,0]$ under the boundary condition~\eqref{eq:kinematic_bed_p1_hat}. From this, we can deduce that if $\bm{k} \neq 0$, then either $\ac{1}{\wp}_{\bm{k}}\equiv 0$, or the solution satisfies
\[
    \ac{1}{\wp}_{\bm{k}}\ac{1}{\wp}_{\bm{k}}' > 0\quad \text{on }(-d,0].
\]
In the latter case, it makes sense to define
\begin{align}\label{eq:qk}
    q_{\bm{k}} \ceq \frac{\ac{1}{\wp}_{\bm{k}}'}{\ac{1}{\wp}_{\bm{k}}},
\end{align}
which then is the positive unique solution to the Riccati equation
\begin{equation}
    \label{eq:qk_riccati}
    q_{\bm{k}}' =2\frac{U'}{U}q_{\bm{k}} + \abs{\bm{k}}^2 - q_{\bm{k}}^2
\end{equation}
on $(-d,0]$, with initial condition $q_{\bm{k}}(-d) = 0$.

Armed with $q_{\bm{k}}$, we infer that the \emph{unique} solution $\ac{1}{\wp}_{\bm{k}}$ to~\eqref{eq:ode_for_p1_hat_divergence} with boundary conditions~\eqref{eq:kinematic_surface_p1_hat} and~\eqref{eq:kinematic_bed_p1_hat} can be recovered as
\begin{equation}\label{eq:recover_r_eps}
    \ac{1}{\wp}_{\bm{k}}= Q_{\bm{k}}\ac{1}{\eta}_{\bm{k}}, \qquad Q_{\bm{k}}(x_3) \ceq \frac{k_1^2 U(0)^2}{q_{\bm{k}}(0)}\exp\parn*{\int_0^{x_3} q_{\bm{k}}(t)\dee t}
\end{equation}
for $\bm{k} \neq 0$. When $\bm{k} = 0$, any constant $\ac{1}{\wp}_0$ is a solution.

Observe next that~\eqref{eq:dynamic_p1_hat} is satisfied for $\bm{k} = 0$ if and only $\ac{1}{\wp}_0 = g\ac{1}{\eta}_0$. For any other $\bm{k} \in \Lambda^*$, it is satisfied if and only if either $\ac{1}{\eta}_{\bm{k}} = 0$ or the \emph{dispersion relation}
\begin{equation}
    \label{eq:dispersion_relation}
    q_{\bm{k}}(0) = \frac{k_1^2 U(0)^2}{g + \sigma \abs{\bm{k}}^2}
\end{equation}
holds. This dispersion relation has at most finitely many solutions, all with $k_1 \neq 0$, due to the following lemma.

\begin{lemma}
    \begin{equation}
        \label{eq:q_limit}
        \lim_{\abs{\bm{k}} \to \infty}{\frac{q_{\bm{k}}(0)}{\abs{\bm{k}}}} = 1.
    \end{equation}
\end{lemma}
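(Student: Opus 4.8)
The plan is to sandwich $q_{\bm{k}}$ between two explicit functions that solve autonomous Riccati equations of the same shape, and then pass to the limit $\abs{\bm{k}} \to \infty$.

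First I would set $\mu \ceq \abs{\bm{k}}$ and $C_0 \ceq \max_{[-d,0]} \abs{U'/U} < \infty$ (finite because $U \in C^2([-d,0])$ has no zeros). Since $q_{\bm{k}}(-d) = 0$ and $q_{\bm{k}} > 0$ on $(-d,0]$, we have $q_{\bm{k}} \ge 0$ throughout $[-d,0]$, so estimating the linear term $2(U'/U)q_{\bm{k}}$ by $\pm 2C_0 q_{\bm{k}}$ in~\eqref{eq:qk_riccati} and completing the square produces the differential inequalities
\[
\nu^2 - (q_{\bm{k}} + C_0)^2 \;\le\; q_{\bm{k}}' \;\le\; \nu^2 - (q_{\bm{k}} - C_0)^2 \qquad \text{on } [-d,0],
\]
with $\nu \ceq \sqrt{\mu^2 + C_0^2}$.

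Next I would introduce the comparison functions
\[
\overline{q}(x_3) \ceq C_0 + \nu \tanh\bigl(\nu(x_3 + d) + \alpha\bigr), \qquad \underline{q}(x_3) \ceq -C_0 + \nu \tanh\bigl(\nu(x_3 + d) + \beta\bigr),
\]
where $\alpha, \beta \in \R$ are chosen with $\tanh\alpha = -C_0/\nu$ and $\tanh\beta = C_0/\nu$ (possible as $C_0 < \nu$ for $\bm{k} \neq 0$). Using $(\tanh)' = 1 - \tanh^2$ one checks that $\overline{q}$ solves $\overline{q}' = \nu^2 - (\overline{q} - C_0)^2$ with $\overline{q}(-d) = 0$, and $\underline{q}$ solves $\underline{q}' = \nu^2 - (\underline{q} + C_0)^2$ with $\underline{q}(-d) = 0$. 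Since the right-hand sides are polynomials in the unknown and $\overline{q}, \underline{q}$ are bounded on the compact interval $[-d,0]$, the comparison principle for scalar ODEs applies to the inequalities above and yields $\underline{q} \le q_{\bm{k}} \le \overline{q}$ on $[-d,0]$; in particular
\[
-C_0 + \nu\tanh(\nu d + \beta) \;\le\; q_{\bm{k}}(0) \;\le\; C_0 + \nu\tanh(\nu d + \alpha).
\]

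It then remains only to let $\abs{\bm{k}} \to \infty$: then $\nu/\mu \to 1$ and $C_0/\nu \to 0$, hence $\alpha, \beta \to 0$ and $\tanh(\nu d + \alpha), \tanh(\nu d + \beta) \to 1$, so dividing the last display by $\mu$ squeezes $q_{\bm{k}}(0)/\abs{\bm{k}}$ between two quantities both tending to $1$, which is~\eqref{eq:q_limit}. I do not expect a genuine obstacle here; the only points needing care are the orientation of the comparison inequalities — this is what dictates completing the square with $+C_0$ in the lower bound and $-C_0$ in the upper one — and verifying that $\overline{q}, \underline{q}$ vanish at $x_3 = -d$. (A Grönwall estimate for the difference $q_{\bm{k}} - \mu\tanh(\mu(\cdot + d))$ would work equally well, but is less transparent.)
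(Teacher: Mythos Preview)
Your proof is correct and follows essentially the same route as the paper: replace the variable coefficient $U'/U$ in the Riccati equation by its extremal values, solve the resulting autonomous equations explicitly in terms of $\tanh$, and sandwich $q_{\bm{k}}$ between the two explicit solutions via the scalar comparison principle. The only cosmetic differences are that the paper uses $m=\inf U'/U$ and $\sup U'/U$ (rather than $\pm\max\abs{U'/U}$) and writes the explicit comparison solution as a rational expression in $\tanh$ instead of shifting the argument.
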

\begin{proof}
    Letting
    \begin{equation}
        \label{eq:infimum}
        m \ceq \inf_{x_3 \in [-d,0]}{\frac{U'(x_3)}{U(x_3)}},
    \end{equation}
    the initial value problem
    \[
        l_{\bm{k}}' = 2m l_{\bm{k}} + \abs{\bm{k}}^2 - l_{\bm{k}}^2, \qquad l_{\bm{k}}(-d)=0
    \]
    has the explicit global solution
    \[
        l_{\bm{k}}(x_3) = \frac{\abs{\bm{k}}^2 \tanh\parn*{(x_3+d)\sqrt{m^2 + \abs{\bm{k}}^2}}}{\sqrt{m^2 + \abs{\bm{k}}^2} - m \tanh\parn*{(x_3+d)\sqrt{m^2 + \abs{\bm{k}}^2}}},
    \]
    and since $l_{\bm{k}}$ is then also a sub-solution of~\eqref{eq:qk_riccati}, this results in a lower bound for $q_{\bm{k}}$. In particular
    \[
        \frac{q_{\bm{k}}(0)}{\abs{\bm{k}}} \geq \frac{\abs{\bm{k}}\tanh(d\sqrt{m^2 + \abs{\bm{k}}^2})}{\sqrt{m^2 + \abs{\bm{k}}^2} - m \tanh(d\sqrt{m^2 + \abs{\bm{k}}^2})},
    \]
    Replacing the infimum in~\eqref{eq:infimum} by a supremum, we instead obtain a super-solution and an upper bound. The limit in~\eqref{eq:q_limit} follows immediately from these two bounds.
\end{proof}

We shall also make use of the following property of the dispersion relation. It is however not crucial for the conclusion, and only simplifies a later argument, as we will comment on in \cref{rem:dynamic_condition}.
\begin{lemma}\label{lem:unique}
    If $\bm{k},\bm{l}\in\Lambda^* \setminus \brac{0}$ solve the dispersion relation~\eqref{eq:dispersion_relation}, and $k_1^2 = l_1^2$, then $k_2^2 = l_2^2$ as well.
\end{lemma}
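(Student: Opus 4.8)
The plan is to show that the map $\abs{\bm{k}}^2 \mapsto q_{\bm{k}}(0)$ depends on $\bm{k}$ only through $\abs{\bm{k}}^2$, so that the dispersion relation~\eqref{eq:dispersion_relation} together with $k_1^2 = l_1^2$ pins down $\abs{\bm{k}}^2$, hence $k_2^2$. First I would observe that the Riccati initial value problem~\eqref{eq:qk_riccati} with $q_{\bm{k}}(-d) = 0$ depends on $\bm{k}$ exclusively through the scalar $\abs{\bm{k}}^2$; writing $s \ceq \abs{\bm{k}}^2$, there is a well-defined function $Q(s) \ceq q_{\bm{k}}(0)$, independent of how $s$ is split between $k_1$ and $k_2$. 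The dispersion relation then becomes
\[
    Q(\abs{\bm{k}}^2) = \frac{k_1^2 U(0)^2}{g + \sigma\abs{\bm{k}}^2}.
\]
If $\bm{k}$ and $\bm{l}$ both solve this with $k_1^2 = l_1^2 \eqc a$, set $s \ceq \abs{\bm{k}}^2$ and $t \ceq \abs{\bm{l}}^2$. Subtracting the two instances of the dispersion relation gives $Q(s) - Q(t) = aU(0)^2\bigl(\tfrac{1}{g+\sigma s} - \tfrac{1}{g+\sigma t}\bigr)$. The right-hand side has the opposite sign to $s - t$ (since $a U(0)^2 > 0$, using that $U$ has no zeros so $U(0) \neq 0$, and $\sigma > 0$), whereas the left-hand side has the same sign as $s - t$ provided $Q$ is strictly increasing. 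So if I can establish that $s \mapsto Q(s)$ is strictly increasing, then $s = t$ is forced, i.e.\ $\abs{\bm{k}}^2 = \abs{\bm{l}}^2$, and combined with $k_1^2 = l_1^2$ this yields $k_2^2 = l_2^2$.

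The crux is therefore the monotonicity of $s \mapsto q_{\bm{k}}(0)$ in $s = \abs{\bm{k}}^2$. I would prove this by a comparison/differentiation-in-parameter argument for the Riccati flow. Formally, let $w \ceq \partial_s q_{\bm{k}}$; differentiating~\eqref{eq:qk_riccati} in $s$ gives the linear variational equation
\[
    w' = \Bigl(2\tfrac{U'}{U} - 2q_{\bm{k}}\Bigr) w + 1, \qquad w(-d) = 0,
\]
whose solution is $w(x_3) = \int_{-d}^{x_3} \exp\bigl(\int_t^{x_3}(2U'/U - 2q_{\bm{k}})\bigr)\dee t > 0$ for $x_3 \in (-d,0]$; in particular $w(0) = \partial_s q_{\bm{k}}(0) > 0$. (Equivalently, without differentiating: if $s < t$ then the right-hand side of the Riccati equation is strictly smaller for the $s$-problem, so by a standard comparison principle for scalar ODEs the $s$-solution stays strictly below the $t$-solution on $(-d,0]$.) This needs the solution $q_{\bm{k}}$ to exist on all of $[-d,0]$ and to be $C^1$ in $(x_3,s)$ jointly, which follows from the discussion preceding~\eqref{eq:recover_r_eps} (where $q_{\bm{k}}$ is identified as the unique solution on $(-d,0]$) together with smooth dependence on parameters for ODEs; I would note that the comparison argument is robust even at points where one might worry about the solution blowing up, since we already know both solutions are finite on the compact interval.

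The main obstacle I anticipate is purely a matter of bookkeeping rather than genuine difficulty: justifying that $q_{\bm{k}}(0)$ is a genuine function of $s = \abs{\bm{k}}^2$ and that the comparison argument applies cleanly on the closed-up interval $[-d,0]$. The Riccati solution is only asserted to exist on $(-d, 0]$ in the text, and one should confirm the comparison inequality persists up to the endpoint $x_3 = 0$; this is immediate from continuity. I would present the argument in the clean comparison form (strict sub-/super-solution) to avoid invoking smooth parameter dependence, making the write-up short. Given \Cref{lem:unique}'s stated role — that it merely simplifies a later argument — a compact proof of the two-line monotonicity plus the sign comparison above should suffice.
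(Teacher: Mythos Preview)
Your proposal is correct and follows essentially the same route as the paper: both arguments fix $k_1^2$ and observe that the left-hand side $q_{\bm{k}}(0)$ of~\eqref{eq:dispersion_relation} is strictly increasing in $\abs{\bm{k}}^2$ (equivalently in $k_2^2$), while the right-hand side is strictly decreasing, forcing uniqueness. The paper simply asserts the monotonicity of $q_{\bm{k}}(0)$ as ``clear'' from the Riccati equation~\eqref{eq:qk_riccati} and the initial condition $q_{\bm{k}}(-d)=0$, whereas you spell it out via the variational equation (or comparison principle); your version is more detailed but not materially different.
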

\begin{proof}
    For fixed $k_1$, the left-hand side of~\eqref{eq:dispersion_relation} is clearly strictly increasing in $k_2^2$ by~\eqref{eq:qk_riccati} and the initial condition $q_{\bm{k}}(-d)=0$, while the right-hand side is strictly decreasing. From this, the lemma follows immediately.
\end{proof}

\subsection{Recovering \texorpdfstring{$\protect\ac{1}{\bm{u}}$}{𝒖¹}}

On the Fourier side, conservation of momentum~\eqref{eq:conservation_of_momentum_eps_1} reads
\begin{equation}
    \label{eq:conservation_of_momentum_eps_1_hat}
    \ac{1}{u}_3^{\bm{k}}U'\bm{e}_1 + ik_1 U \ac{1}{\bm{u}}^{\bm{k}} + U^2 ik_1 (i\bm{k},\partial_3) \times (\ac{1}{\varphi}_{\bm{k}}\bm{e}_2) + (i\bm{k},\partial_3)\ac{1}{\wp}_{\bm{k}} = 0
\end{equation}
while incompressibility~\eqref{eq:incompressibility_eps} reads
\begin{equation}
    \label{eq:incompressibility_eps_hat}
    (i\bm{k},\partial_3) \cdot \ac{1}{\bm{u}}^{\bm{k}} = 0
\end{equation}
when $n = 1$, with corresponding boundary conditions
\begin{equation}
    \label{eq:kinematics_eps_hat}
    \ac{1}{u}_3^{\bm{k}}(-d) = \ac{1}{u}_3^{\bm{k}}(0) = 0
\end{equation}
from~\eqref{eq:kinematic_eps}. It is now convenient to split the analysis of these equations into two separate cases.

\subsection*{The case of \texorpdfstring{$k_1 = 0$}{𝑘₁=0}}
By $(+)$-symmetry, cf.~\eqref{eq:fourier_series_trig}, we have $\ac{1}{u}_3^{k_2\bm{e}_2} = \ac{1}{u}_2^{k_2\bm{e}_2} = 0$. Therefore,~\eqref{eq:incompressibility_eps_hat} and~\eqref{eq:kinematics_eps_hat} are trivially satisfied in this case, while~\eqref{eq:conservation_of_momentum_eps_1_hat} reduces to
\[
    k_2 \ac{1}{\wp}_{k_2\bm{e}_2} = \ac{1}{\wp}_{k_2\bm{e}_2}' = 0,
\]
which are satisfied when $k_2 \neq 0$ because $Q_{k_2\bm{e}_2}\equiv0$ in~\eqref{eq:recover_r_eps}. They are also satisfied when $k_2 = 0$, since we saw that $\ac{1}{\wp}_0 = g\ac{1}{\eta}_0$ is constant under~\eqref{eq:dynamic_p1_hat}. No additional demands need to be met by the Fourier coefficients $\ac{1}{u}_1^{k_2 \bm{e}_2}$.

\subsection*{The case of \texorpdfstring{$k_1 \neq 0$}{𝑘₁≠0}}
When $k_1 \neq 0$, we can directly solve~\eqref{eq:conservation_of_momentum_eps_1_hat} to obtain
\[
    \ac{1}{\bm{u}}^{\bm{k}} =\brak*{\frac{(-\bm{k},i\partial_3)}{Uk_1}-\frac{U'\bm{e}_1}{U^2k_1^2}\partial_3}\ac{1}{\wp}_{\bm{k}} - (i\bm{k},\partial_3) \times (U \ac{1}{\varphi}_{\bm{k}}\bm{e}_2),
\]
under which~\eqref{eq:incompressibility_eps_hat} becomes
\[
    \frac{i}{k_1 U}\parn*{\ac{1}{\wp}_{\bm{k}}'' -2 \frac{U'}{U} \ac{1}{\wp}_{\bm{k}}' -\abs{\bm{k}}^2 \ac{1}{\wp}_{\bm{k}}} = 0,
\]
which is satisfied by~\eqref{eq:ode_for_p1_hat}. Moreover, since
\[
    \ac{1}{u}_3^{\bm{k}} = \frac{i}{k_1 U} \parn*{\ac{1}{\wp}_{\bm{k}}' - k_1^2 U^2 \ac{1}{\varphi}_{\bm{k}}},
\]
the boundary conditions in~\eqref{eq:kinematics_eps_hat} are satisfied by~\eqref{eq:kinematic_surface_p1_hat} and~\eqref{eq:kinematic_bed_p1_hat}.

\subsection{Summing up at the linear level}
We now sum up the findings at the linear level. Let $N(U)$ be the set of $\bm{k} \in \Lambda^* \setminus \brac{0}$ satisfying the dispersion relation~\eqref{eq:dispersion_relation}. Then the kernel of the linearization consists precisely of the elements of the form
\begin{equation}
    \label{eq:representation_kernel}
    \begin{aligned}
        \ac{1}{\eta}(\bm{x}')  & = a_0 + \sum_{\bm{k} \in N(U)} a_{\bm{k}} e^{i \bm{k} \cdot \bm{x}'},                                                                                                                                        \\
        \ac{1}{\wp}(\bm{x})    & = ga_0 + \sum_{\bm{k} \in N(U)} a_{\bm{k}}Q_{\bm{k}}(x_3) e^{i \bm{k} \cdot \bm{x}'},                                                                                                                        \\
        \ac{1}{\bm{u}}(\bm{x}) & = - \nabla \times (U \ac{1}{\varphi} \bm{e}_2) + \ac{1}{\bm{v}}(\bm{x}),                                                                                                                                     \\
        \ac{1}{\bm{v}}(\bm{x}) & = w(x_2,x_3)\bm{e_1} + \sum_{\bm{k} \in N(U)} a_{\bm{k}}Q_{\bm{k}}(x_3)\brak*{\frac{(-\bm{k},iq_{\bm{k}}(x_3))}{U(x_3)k_1}-\frac{q_{\bm{k}}(x_3)U'(x_3)\bm{e}_1}{k_1^2 U(x_3)^2}}e^{i \bm{k} \cdot \bm{x}'},
    \end{aligned}
\end{equation}
where $w$ is $\lambda_2$-periodic and even in $x_2$, and the $a_{\bm{k}}$ are $(+)$-symmetric with respect to $\bm{k}$.

\section{Solvability at the quadratic level and proof of the main theorem}\label{sec:quadratic_level}
Let us now turn to the quadratic level. The terms in~\eqref{eq:flatE1} of order $\epsilon^2$ give
\begin{multline}
    \label{eq:conservation_of_momentum_eps_2}
    \parn{\ac{0}{\bm{u}} \cdot \nabla}\parn[\big]{\ac{2}{\bm{u}} + \ac{1}{M}\ac{1}{\bm{u}} + \ac{2}{M}\ac{0}{\bm{u}}} + \parn{\ac{1}{\bm{u}} \cdot \nabla}\parn[\big]{\ac{1}{\bm{u}} + \ac{1}{M}\ac{0}{\bm{u}}} + \parn{\ac{2}{\bm{u}} \cdot \nabla}\ac{0}{\bm{u}}\\
    + \ac{1}{M}^\top\brak[\Big]{(\ac{0}{\bm{u}}\cdot\nabla)\parn[\big]{\ac{1}{\bm{u}}+\ac{1}{M}\ac{0}{\bm{u}}} + (\ac{1}{\bm{u}}\cdot \nabla)\ac{0}{\bm{u}}} + \nabla \ac{2}{\wp} = 0,
\end{multline}
where we compute the following:
\begin{align*}
    \parn{\ac{0}{\bm{u}} \cdot \nabla}\parn[\big]{\ac{2}{\bm{u}} + \ac{1}{M}\ac{1}{\bm{u}} + \ac{2}{M}\ac{0}{\bm{u}}}                                      & = U \partial_1\parn[\big]{\ac{2}{\bm{u}} + \ac{1}{M}\ac{1}{\bm{u}} + \ac{2}{M}\ac{0}{\bm{u}}},                                                                \\
    \parn{\ac{1}{\bm{u}} \cdot \nabla}\parn[\big]{\ac{1}{\bm{u}} + \ac{1}{M}\ac{0}{\bm{u}}}                                                                & =
    \begin{multlined}[t]\brak[\Big]{\partial_3(U \ac{1}{\varphi})\partial_1(U'\ac{1}{\varphi}) - \partial_1(U \ac{1}{\varphi})\partial_3(U'\ac{1}{\varphi}) + \ac{1}{\bm{v}}\cdot \nabla (U'\ac{1}{\varphi})}\bm{e}_1\\
        +\partial_3(U \ac{1}{\varphi})\partial_1 \ac{1}{\bm{v}} - \partial_1(U \ac{1}{\varphi})\partial_3 \ac{1}{\bm{v}} + (\ac{1}{\bm{v}}\cdot \nabla)\ac{1}{\bm{v}},
    \end{multlined}                                                                                      \\
    (\ac{2}{\bm{u}} \cdot \nabla)\ac{0}{\bm{u}}                                                                                                            & =\ac{2}{u}_3U'\bm{e}_1                    ,                                                                                                                   \\
    \ac{1}{M}^\top\brak[\Big]{(\ac{0}{\bm{u}}\cdot\nabla)\parn[\big]{\ac{1}{\bm{u}}+\ac{1}{M}\ac{0}{\bm{u}}} + (\ac{1}{\bm{u}}\cdot \nabla)\ac{0}{\bm{u}}} & = - \partial_3 \ac{1}{\varphi}U'\ac{1}{v}_3\bm{e}_1 + U \partial_1 \ac{1}{v}_3 \nabla \ac{1}{\varphi} -U \partial_3 \ac{1}{\varphi}\partial_1 \ac{1}{\bm{v}}.
\end{align*}
In particular, taking the first component of the curl of~\eqref{eq:conservation_of_momentum_eps_2}, we have
\begin{multline*}
    \parn*{\nabla \times \brak[\big]{\parn{\ac{1}{\bm{v}} \cdot \nabla}\ac{1}{\bm{v}}}}_1 + \partial_1\parn[\big]{\nabla \times \brak[\big]{U\parn[\big]{\ac{2}{\bm{u}} + \ac{1}{M}\ac{1}{\bm{u}} + \ac{2}{M}\ac{0}{\bm{u}}}}}_1 \\
    +\partial_2\parn*{\partial_3(U\ac{1}{\varphi})\partial_1 \ac{1}{v}_3 - \partial_1(U\ac{1}{\varphi})\partial_3 \ac{1}{v}_3} - \partial_3\parn*{\partial_3(U\ac{1}{\varphi})\partial_1\ac{1}{v}_2 - \partial_1(U\ac{1}{\varphi})\partial_3 \ac{1}{v}_2} \\
    + \partial_3(\partial_3\ac{1}{\varphi}U\partial_1 \ac{1}{v}_2 - \partial_2 \ac{1}{\varphi} U \partial_1 \ac{1}{v}_3) = 0.
\end{multline*}

Averaging over one period in the $x_1$-direction we see that
\begin{align*}
    \intbar_0^{\lambda_1} \partial_2\parn*{\partial_3(U\ac{1}{\varphi})\partial_1 \ac{1}{v}_3 - \partial_1(U\ac{1}{\varphi})\partial_3 \ac{1}{v}_3}\dee x_1 & = \intbar_0^{\lambda_1} \partial_2\parn*{\partial_3(U\ac{1}{\varphi})\partial_1 \ac{1}{v}_3 + (U\ac{1}{\varphi})\partial_1\partial_3 \ac{1}{v}_3}\dee x_1 \\
    & = \partial_3\intbar_0^{\lambda_1} \partial_2 ( \ac{1}{\varphi}U\partial_1 \ac{1}{v}_3)\dee x_1,                                                           \\
    \intbar_0^{\lambda_1}\partial_3\parn*{\partial_3(U\ac{1}{\varphi})\partial_1\ac{1}{v}_2 - \partial_1(U\ac{1}{\varphi})\partial_3 \ac{1}{v}_2}\dee x_1   & =\intbar_0^{\lambda_1} \partial_3\parn*{\partial_3(U\ac{1}{\varphi})\partial_1\ac{1}{v}_2 + U\ac{1}{\varphi}\partial_1\partial_3 \ac{1}{v}_2}\dee x_1     \\
    & =\partial_3\intbar_0^{\lambda_1} \partial_3 (\ac{1}{\varphi}U\partial_1\ac{1}{v}_2)\dee x_1
\end{align*}
and that
\[
    \partial_2 ( \ac{1}{\varphi}U\partial_1 \ac{1}{v}_3) - \partial_3 (\ac{1}{\varphi}U\partial_1\ac{1}{v}_2) + \partial_3\ac{1}{\varphi}U\partial_1 \ac{1}{v}_2 - \partial_2 \ac{1}{\varphi} U \partial_1 \ac{1}{v}_3 = \ac{1}{\varphi}[\nabla \times (U \partial_1 \ac{1}{\bm{v}})]_1=0
\]
by~\eqref{eq:conservation_of_momentum_eps_1}, which reads
\[
    \ac{1}{v}_3 U' \bm{e}_1 + U \partial_1 \ac{1}{\bm{v}} + \nabla \ac{1}{\wp} = 0
\]
for $\bm{v}$. Hence
\[
    \intbar_0^{\lambda_1} \parn[\big]{\nabla \times \brak[\big]{\parn[\big]{\ac{1}{\bm{v}} \cdot \nabla}\ac{1}{\bm{v}}}}_1 \dee x_1 = 0
\]
is a necessary condition for solvability at the second order. Since $\ac{1}{\bm{v}}$ is divergence free by~\eqref{eq:representation_kernel} and \eqref{eq:incompressibility_eps}, this condition simplifies to
\begin{align}
    0 & = \intbar_0^{\lambda_1} \parn*{\partial_2\brak[\big]{\ac{1}{v}_1 \partial_1 \ac{1}{v}_3 + \ac{1}{v}_2\partial_2 \ac{1}{v}_3 + \ac{1}{v}_3 \partial_3 \ac{1}{v}_3} - \partial_3\brak[\big]{\ac{1}{v}_1 \partial_1 \ac{1}{v}_2 + \ac{1}{v}_2 \partial_2 \ac{1}{v}_2 + \ac{1}{v}_3 \partial_3 \ac{1}{v}_2}} \dee x_1 \notag                                                      \\
    & = \intbar_0^{\lambda_1} \parn*{\partial_2\brak[\big]{(\partial_2 \ac{1}{v}_2 + \partial_3 \ac{1}{v}_3)\ac{1}{v}_3 + \ac{1}{v}_2\partial_2 \ac{1}{v}_3 + \ac{1}{v}_3 \partial_3 \ac{1}{v}_3} - \partial_3\brak[\big]{(\partial_2 \ac{1}{v}_2 + \partial_3 \ac{1}{v}_3) \ac{1}{v}_2 + \ac{1}{v}_2 \partial_2 \ac{1}{v}_2 + \ac{1}{v}_3 \partial_3 \ac{1}{v}_2}} \dee x_1 \notag \\
    & = \intbar_0^{\lambda_1}\parn*{\brak{\partial_2^2 - \partial_3^2}(\ac{1}{v}_2 \ac{1}{v}_3) + \partial_2 \partial_3 (\ac{1}{v}_3^2 - \ac{1}{v}_2^2)} \dee x_1, \label{eq:solvability_integral}
\end{align}
into which we will insert the representation of $\ac{1}{\bm{v}}$ from~\eqref{eq:representation_kernel}.

To that end, let us first introduce the subset
\[
    N(U)^+ \coloneqq \brac{\bm{k} \in N(U):k_1>0,k_2>0}
\]
of $N(U)$. Then, we see that
\begin{align*}
    \ac{1}{v}_2 & = 4\sum_{\bm{k} \in N(U)^+}  \frac{a_{\bm{k}} k_2 Q_{\bm{k}}}{k_1U} \sin(k_1 x_1) \sin(k_2 x_2)
    \intertext{and}
    \ac{1}{v}_3 & = - 2 \sum_{
        \substack{
            k_1 \bm{e}_1 \in N(U)                                                                                         \\
            k_1 > 0
    }} \frac{a_{\bm{k}} Q_{\bm{k}} q_{\bm{k}}}{k_1 U} \sin(k_1 x_1) - 4\sum_{\bm{k} \in N(U)^+} \frac{a_{\bm{k}} Q_{\bm{k}} q_{\bm{k}}}{k_1 U} \sin(k_1 x_1)\cos(k_2 x_2),
\end{align*}
whence, using \cref{lem:unique},
\begin{align*}
    \intbar_0^{\lambda_1} \ac{1}{v}_2 \ac{1}{v}_3 \dee x_1  & = - 4 \sum_{\bm{k} \in N(U)^+} \frac{a_{\bm{k}}^2 k_2 Q_{\bm{k}}^2 q_{\bm{k}}}{k_1^2 U^2} \sin(2k_2 x_2), \\
    \partial_2 \intbar_0^{\lambda_1} \ac{1}{v}_3^2 \dee x_1 & = -8 \sum_{\bm{k} \in N(U)^+} \frac{a_{\bm{k}}^2 k_2 Q_{\bm{k}}^2q_{\bm{k}}^2}{k_1^2U^2}\sin(2k_2 x_2),   \\
    \partial_2 \intbar_0^{\lambda_1} \ac{1}{v}_2^2 \dee x_1 & = 8 \sum_{\bm{k} \in N(U)^+} \frac{a_{\bm{k}}^2 k_2^3 Q_{\bm{k}}^2}{k_1^2 U^2}\sin(2k_2x_2),
\end{align*}
from which we deduce that
\begin{align*}
    0 & = \intbar_0^{\lambda_1}\parn*{\brak{\partial_2^2 - \partial_3^2}(\ac{1}{v}_2 \ac{1}{v}_3) + \partial_2 \partial_3 (\ac{1}{v}_3^2 - \ac{1}{v}_2^2)} \dee x_1 \\
    & = -8 \frac{U'}{U^3} \sum_{\bm{k} \in N(U)^+} \frac{a_{\bm{k}}^2 k_2}{k_1^2}Q_{\bm{k}}^2(k_1^2 -k_2^2 + q_{\bm{k}}^2)\sin(2k_2 x_2)
\end{align*}
after repeated use of the Riccati equation~\eqref{eq:qk_riccati} that $q_{\bm{k}}$ satisfies.

In particular, taking the derivative with respect to $x_2$ and evaluating at $x_2 = 0$, we arrive at the identity
\begin{equation}
    \label{eq:final_identity}
    U'f = 0
\end{equation}
on $[-d,0]$, where $f$ is the function given by
\[
    f \ceq \sum_{\bm{k} \in N(U)^+} \frac{a_{\bm{k}}^2 k_2^2}{k_1^2}Q_{\bm{k}}^2(k_1^2-k_2^2 + q_{\bm{k}}^2).
\]

\begin{lemma}
    \label{lem:proof_of_main_theorem}
    If there is any $\bm{k} \in N(U)^+$ for which $a_{\bm{k}} \neq 0$, then $U$ is necessarily constant.
\end{lemma}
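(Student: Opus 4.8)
The plan is to argue by contradiction: suppose $a_{\bm k}\neq 0$ for some $\bm k\in N(U)^+$ while $U$ is \emph{not} constant, and extract a sign contradiction from~\eqref{eq:final_identity} by evaluating at a suitably chosen point. First I would record some regularity: since $U\in C^2$ has no zeros, the Riccati equation~\eqref{eq:qk_riccati} has a $C^2$ right-hand side, so $q_{\bm k}\in C^2$ and hence $Q_{\bm k}\in C^3$ by~\eqref{eq:recover_r_eps}; in particular the function $f$ of~\eqref{eq:final_identity} is $C^2$. Using $Q_{\bm k}'=q_{\bm k}Q_{\bm k}$ (immediate from~\eqref{eq:qk} and~\eqref{eq:recover_r_eps}), I would rewrite
\[
    f=\sum_{\bm k\in N(U)^+}c_{\bm k}\,h_{\bm k},\qquad c_{\bm k}:=\frac{a_{\bm k}^2 k_2^2}{k_1^2}\geq 0,\qquad h_{\bm k}:=(k_1^2-k_2^2)Q_{\bm k}^2+(Q_{\bm k}')^2 .
\]
Since $U'f\equiv 0$ on $[-d,0]$ and $\{U'\neq 0\}$ is a nonempty open set (this is precisely where non-constancy of $U$ enters), $f$ vanishes identically on $\{U'\neq 0\}$, and therefore so do $f'$ and $f''$.

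The heart of the matter is to push this vanishing to the boundary of $\{U'\neq 0\}$, where the coefficients appearing below degenerate. I would set $\alpha_0:=\inf\{x_3\in[-d,0]:U'(x_3)\neq 0\}$. By construction $\alpha_0$ lies in the closure of $\{U'\neq 0\}$, so $f''(\alpha_0)=0$ by continuity of $f''$; and $U'\equiv 0$ on $[-d,\alpha_0)$, so (again by continuity) $U'(\alpha_0)=U''(\alpha_0)=0$ whenever $\alpha_0>-d$, while if $\alpha_0=-d$ one has instead $Q_{\bm k}'(-d)=q_{\bm k}(-d)Q_{\bm k}(-d)=0$.

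Next I would compute $h_{\bm k}''$. Differentiating $h_{\bm k}$ and eliminating $Q_{\bm k}''$ through the second-order ODE $Q_{\bm k}''=2\tfrac{U'}{U}Q_{\bm k}'+\abs{\bm k}^2Q_{\bm k}$ from~\eqref{eq:ode_for_p1_hat}, together with the identity $k_1^2-k_2^2+\abs{\bm k}^2=2k_1^2$, gives $h_{\bm k}'=4k_1^2Q_{\bm k}Q_{\bm k}'+4\tfrac{U'}{U}(Q_{\bm k}')^2$; one more differentiation (again using~\eqref{eq:ode_for_p1_hat}) yields an expression for $h_{\bm k}''$ in which every summand is a product of one of $(Q_{\bm k}')^2$, $Q_{\bm k}Q_{\bm k}'$, $Q_{\bm k}^2$ with a coefficient built from $k_1$, $\abs{\bm k}$, $\tfrac{U'}{U}$ and $(\tfrac{U'}{U})'$ — the only summand free of $\tfrac{U'}{U}$ and $(\tfrac{U'}{U})'$ being $4k_1^2(Q_{\bm k}')^2$, and the only summand not containing the factor $Q_{\bm k}'$ being $4k_1^2\abs{\bm k}^2Q_{\bm k}^2$. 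Evaluating at $\alpha_0$, the degeneracies above kill all other terms, leaving $h_{\bm k}''(\alpha_0)=4k_1^2\bigl[(Q_{\bm k}'(\alpha_0))^2+\abs{\bm k}^2Q_{\bm k}(\alpha_0)^2\bigr]$ when $\alpha_0>-d$, and $h_{\bm k}''(-d)=4k_1^2\abs{\bm k}^2Q_{\bm k}(-d)^2$ when $\alpha_0=-d$. In either case $h_{\bm k}''(\alpha_0)>0$, because $Q_{\bm k}>0$ on $[-d,0]$ by~\eqref{eq:recover_r_eps} and $k_1,k_2\neq 0$ for $\bm k\in N(U)^+$. Hence $0=f''(\alpha_0)=\sum_{\bm k\in N(U)^+}c_{\bm k}h_{\bm k}''(\alpha_0)$ is a sum of nonnegative terms, which forces $c_{\bm k}=0$, and therefore $a_{\bm k}=0$, for every $\bm k\in N(U)^+$ — contradicting the hypothesis. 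Thus $U$ must be constant.

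The differentiations are mechanical; I expect the only delicate point to be the selection of the evaluation point and the verification of its properties. One cannot simply evaluate at $x_3=-d$, since $U$ may be constant near the bottom and then $f$ need not vanish there; nor can one hope to propagate $f\equiv 0$ from $\{U'\neq 0\}$ to all of $[-d,0]$ by an ODE-uniqueness argument, since $(Q_{\bm k}')^2$ is in general no smoother than $C^2$ and need not solve a linear ODE with continuous coefficients. Taking $\alpha_0=\inf\{U'\neq 0\}$ resolves both issues at once: it sits in the closure of $\{U'\neq 0\}$, so $f''$ vanishes there, and $U'$ — and, when $\alpha_0>-d$, also $U''$ — vanishes there, which is exactly what makes the sign of $h_{\bm k}''(\alpha_0)$ legible. (Only the evenness/$\lambda_2$-periodicity and positivity of the $Q_{\bm k}$, finiteness of $N(U)^+$, and the Riccati/ODE identities already established above are used.)
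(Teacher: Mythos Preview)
Your proof is correct, but it takes a different route than the paper. The paper works with $f'$ rather than $f''$: using the Riccati equation one finds
\[
    f' = 4\sum_{\bm{k} \in N(U)^+} \frac{a_{\bm{k}}^2 k_2^2}{k_1^2}\,Q_{\bm{k}}^2 q_{\bm{k}}\Bigl(k_1^2 + \tfrac{U'}{U}q_{\bm k}\Bigr),
\]
and then runs a connectedness argument on the set $V=\{U'=0\}\subset(-d,0)$. First, since $q_{\bm k}(-d)=0$, the bracket is positive near the bottom and hence $f'>0$ there, which via~\eqref{eq:final_identity} forces $V$ to be nonempty. Then, at any $z\in V$ the formula gives $f'(z)=4\sum c_{\bm k}k_1^2 Q_{\bm k}(z)^2 q_{\bm k}(z)>0$, so $f$ is nonzero on a punctured neighborhood of $z$, whence (again by~\eqref{eq:final_identity}) $V$ is open. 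Being open, closed, and nonempty in the connected interval $(-d,0)$, it equals $(-d,0)$.

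Your approach trades this topological step for a pointwise contradiction at $\alpha_0=\inf\{U'\neq 0\}$, at the cost of one extra differentiation and a case split ($\alpha_0>-d$ versus $\alpha_0=-d$). Both arguments ultimately exploit the same mechanism --- the positivity of $Q_{\bm k}$ and the ODE structure making a suitable derivative of $f$ strictly positive wherever $U'$ (and possibly $U''$) vanishes --- but the paper's version stays at first order and avoids the boundary case analysis. One small inaccuracy in your write-up: there are \emph{two} summands in $h_{\bm k}''$ free of $U'/U$ and $(U'/U)'$, namely $4k_1^2(Q_{\bm k}')^2$ and $4k_1^2|\bm k|^2 Q_{\bm k}^2$, not just the first; your stated final values $h_{\bm k}''(\alpha_0)=4k_1^2\bigl[(Q_{\bm k}')^2+|\bm k|^2 Q_{\bm k}^2\bigr]$ and $h_{\bm k}''(-d)=4k_1^2|\bm k|^2 Q_{\bm k}(-d)^2$ are nonetheless correct.
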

\begin{proof}
    Observe first that we can compute
    \[
        f' = 4\sum_{\bm{k} \in N(U)^+} \frac{a_{\bm{k}}^2 k_2^2}{k_1^2}Q_{\bm{k}}^2 q_{\bm{k}}\parn*{k_1^2 + \frac{U'}{U}q_k}
    \]
    by using~\eqref{eq:qk_riccati}. Define now the set
    \[
        V = \brac{x_3 \in (-d,0) : U'(x_3) = 0},
    \]
    which clearly a closed subset of $(-d,0)$.

    We first demonstrate that $V$ is nonempty. Indeed, the initial condition $q_{\bm{k}}(-d) = 0$, combined with continuity, implies that there is a nonempty interval $I_\delta = (-d,-\delta)$ such that
    \[
        k_1^2 + \frac{U'}{U}q_{\bm{k}} \geq \frac{1}{2}k_1^2
    \]
    for all $x_3 \in I_\delta$ and $\bm{k} \in N(U)^+$. It follows that
    \[
        f'(x_3) \geq 2 \sum_{\bm{k} \in N(U)^+} a_{\bm{k}}^2 k_2^2Q_{\bm{k}}(x_3)^2 q_{\bm{k}}(x_3) > 0
    \]
    for all $x_3 \in I_\delta$, as all terms for which $a_{\bm{k}} k_2 \neq 0$ are positive. By possibly shrinking $\delta$, we may therefore ensure that $f$ is nonzero on $I_\delta$. Due to~\eqref{eq:final_identity}, this means that $I_\delta \subset V$, which in particular is therefore nonempty.

    Finally, suppose that $z \in V$. Then
    \[
        f'(z) = 4 \sum_{\bm{k} \in N(U)^+} a_{\bm{k}}^2 k_2^2 Q_{\bm{k}}(z)^2 q_{\bm{k}}(z) > 0,
    \]
    so there is some open interval $I \ni z$ such that $f$ is nonzero on the punctured neighborhood $I \setminus \brac{z}$. Due to~\eqref{eq:final_identity}, this means that $U'$ must necessarily vanish on $I \setminus \brac{z}$, and therefore all of $I$. Hence $I \subset V$, and we conclude that $V = (-d,0)$.
\end{proof}

This concludes the proof of \cref{thm:main_theorem}.

\begin{remark}\label{rem:dynamic_condition}
    Recapitulating \cref{sec:linear_level,sec:quadratic_level}, we note that the specific dynamic boundary condition~\eqref{eq:dynamic_top} was only used to ensure that $N(U)$ is finite (so that we only have to deal with finite sums) and that $k_1\ne0$ for all $\bm k\in N(U)$ (so that we can divide by $k_1$ in the computations). In particular, this means that \cref{thm:main_theorem} still holds for other physical dynamical boundary conditions, as long as these properties continue to be satisfied. One example of this would be hydroelastic waves, like those treated in~\cite{plotnikov_11_modelling, ahmad_24_resonant}.

    The additional property we deduced from the dynamic boundary condition in \cref{lem:unique}, while mild, is in fact not necessary for the conclusion of \cref{thm:main_theorem}. It was introduced mainly to simplify the presentation: Instead of appealing to \cref{lem:unique}, we could have dealt with sums containing potentially more terms appearing when inserting the representation of $\smash{\ac{1}{\bm{v}}}$ into~\eqref{eq:solvability_integral}. This can be simplified somewhat by projecting onto any given mode $\sin(2k_2 x_2)$ associated with a nontrivial coefficient. Then, similar computations would lead to the same conclusion as in \Cref{lem:proof_of_main_theorem}.
\end{remark}

\section*{Acknowledgments}

All four authors were supported by the Swedish Research Council under grant no. 2021-06594 while in residence at Institut Mittag-Leffler in Djursholm, Sweden during the fall semester of 2023.

This project has received funding from the European Research Council (ERC) under the European Union's Horizon 2020 research and innovation programme (grant agreement no. 678698), the Swedish Research Council (grant no. 2020-00440), and by the Research Council of Norway (grant agreement
No. 325114).

This research was funded in whole or in part by the Austrian Science Fund (FWF) [10.55776/ ESP8360524]. For open access purposes, the author has applied a CC BY public copyright license to any author-accepted manuscript version arising from this submission.

\printbibliography[title=References]
\end{document}